\title[Bounds for the Regularity Radius of Delone Sets]{Bounds for the Regularity Radius of Delone Sets
}
\begin{document}

\author[N. Dolbilin]{Nikolay Dolbilin}
\address{Nikolay Dolbilin, Steklov Mathematical Institute, 8 Gubkina str., Moscow, 119991, Russia}
\email{dolbilin@mi-ras.ru}

\author[A. Garber]{Alexey Garber}
\address{Alexey Garber, School of Mathematical \& Statistical Sciences, The University of Texas Rio Grande Valley, 1 West University Blvd, Brownsville, TX, 78520, USA}
\email{alexey.garber@utrgv.edu}

\author[E. Schulte]{Egon Schulte}
\address{Egon Schulte, Department of Mathematics, Northeastern University, Boston, MA, 02115, USA}
\email{e.schulte@northeastern.edu}

\author[M.Senechal]{Marjorie Senechal}
\address{Marjorie Senechal, Department of Mathematics and Statistics, Smith College, Northampton, MA 01063, USA}
\email{senechal@smith.edu}

\newcommand{\R}{\ensuremath{\mathbb{R}}}

\newtheorem{theorem}{Theorem}[section]
\newtheorem{proposition}[theorem]{Proposition}
\newtheorem{corollary}[theorem]{Corollary}
\newtheorem{lemma}[theorem]{Lemma}
\newtheorem{problem}[theorem]{Problem}
\newtheorem{conjecture}[theorem]{Conjecture}
\newtheorem{question}{Question}
\newtheorem{claim}{Claim}
\newtheorem{observation}{Observation}

\theoremstyle{definition}
\newtheorem{definition}[theorem]{Definition}

\newtheorem{remark}[theorem]{Remark}

\begin{abstract}
Delone sets are discrete point sets $X$ in $\mathbb{R}^d$ characterized by parameters $(r,R)$, where (usually) $2r$ is the smallest inter-point distance of $X$, and $R$ is the radius of a largest ``empty ball" that can be inserted into the interstices of $X$. The regularity radius $\hat{\rho}_d$ is defined as the smallest positive number $\rho$ such that each Delone set with congruent clusters of radius $\rho$ is a regular system, that is, a point orbit under a crystallographic group. We discuss two conjectures on the growth behavior of the regularity radius. Our ``Weak Conjecture" states that $\hat{\rho}_{d}={{\rm O}(d^2\log d)}R$ as $d\rightarrow\infty$, independent of~$r$. This is verified in the paper for two important subfamilies of Delone sets: those with full-dimensional clusters of radius $2r$ and those with full-dimensional sets of $d$-reachable points. We also offer support for the plausibility of a ``Strong Conjecture", stating that $\hat{\rho}_{d}={{\rm O}(d\log d)}R$ as $d\rightarrow\infty$, independent of $r$.
\end{abstract}

\maketitle

\section{Introduction}
\label{intro}

In this paper we continue the mathematical study of crystal structure from a local viewpoint, to understand the conditions under which a discrete point set in  $\mathbb{R}^d$ is an orbit (or the union of a finite number of orbits) of a crystallographic group.  

Our main focus is on ``self-assembly'' properties of point sets in $\mathbb R^d$ as usual model for atomic structure of a solid structure. That is we study how local conditions for clusters of point sets imply periodicity of the global points sets. A good example of a similar self-assembly for tilings can be seen in the recently announced breakthrough solution to the ``Einstein problem" \cite{SMKG-S2023a,SMKG-S2023b} where one tile can self-assemble itself into a tiling of $\R^2$ but only in an aperiodic way.

One of the ways to model atomic structures is by using point sets that avoid small interpoint distances and large voids. This is usually done by assuming that the set $X \subset \mathbb{R}^d$ under investigation is a Delone set. Thus we assume that for the point set $X \subset \mathbb{R}^d$ there are positive real numbers $r$ and $R$ with $r\leq R$ such that
\begin{itemize}
\item an open ball $B_{\mathbf y}^o(r)$ of radius $r$  centered at any ${\mathbf y} \in \mathbb{R}^d$ contains \emph{at most} one point of $X$; and
\item a closed ball $B_{\mathbf y}(R)$ of radius $R$ centered at ${\mathbf y}$ contains \emph{at least} one point of $X$.   
\end{itemize}
In this paper we assume that the parameters $r$ and $R$ are known, unless otherwise stated. 
 
Delone sets are a modern version of the point sets B.~N.~Delone introduced in the early 1930s; he called them $(r,R)$ systems; see, e.g., {\cite{DAP}}. Here we discuss relations between the local and global (group-theoretic) viewpoints for the special case of regular (or multiregular) point sets.

A Delone set  $X$ is a \emph{regular system} if its symmetry group {$S(X)$} is point-transitive, i.e.., if for any points ${\mathbf x}$ and ${\mathbf x'}$ of $X$ there is an isometry $g$ such that $g({\mathbf x}) = {\mathbf x'}$ and $g(X) = X$. For each radius $\rho > 0$, the isometry $g$ also maps the $\rho$-cluster $C_{\mathbf x}(\rho):= X \cap B_{\mathbf x}(\rho)$ of ${\mathbf x}$ onto the $\rho$-cluster $C_{\mathbf x'}(\rho)$ of ${\mathbf x'}$. 

The number $N(\rho)$ of non-congruent (non-equivalent) cluster classes is called the cluster-counting function of $X$; it is non-decreasing. {If $X$ is regular, then clearly $N(\rho)=1$ for all $\rho > 0$.}  We also study the group $S_{\mathbf x}(\rho)$ of isometries that stabilize both ${\mathbf x}$ and the cluster $C_{\mathbf x}(\rho)$.    {These cluster groups $S_{\mathbf x}(\rho)$ are non-increasing.} Note that $N(\rho)$ and $S_{\mathbf x}(\rho)$ are interrelated: for example, $X$ is regular if $N(2R)=1$ and $C_{\mathbf x}(2R)$ is centro-symmetric \cite{DM2015}.

A criterion for regularity, due to Delone, Dolbilin, Shtogrin, and Galiulin \cite{local}, is fundamental for the local theory; we state and discuss it in the next section. 

We define the \emph{regularity radius} ${\hat\rho_d}=\hat\rho_d(r,R)$ to be the smallest value of the radius $\rho$ of the clusters such that every Delone set with $N(\rho) = 1$ is regular.  The main problem of the local theory is to find an upper bound for $\hat\rho_d$.  The order of the $2R$-cluster group $S_{\mathbf x}(2R)$ and the behavior of the groups $S_{\mathbf x}(\rho)$ play key roles.  

For $d = 2,3$ it is known that $\hat\rho_2=4R$ \cite{Dol2015,Dol2018a} and $6R \leq \hat\rho_3 \leq 10R$ \cite{LowerBound,DGLS}. Moreover, for  $d = 3$,  if $N(2R) = 1$, then the order of any axis in $S_{\mathbf x}(2R)$ does not exceed~6 {\cite{Sto2010}}. This is a ``local version'' of the so-called ``crystallographic restriction,'' which states that the orders of rotation in a lattice in $\mathbb{R}^2$ and $\mathbb{R}^3$ can only be $2, 3, 4$ or $6$.

In this paper we offer two conjectures about the growth rate of $\hat{\rho}_{d}$. Our Weak Conjecture claims that $\hat{\rho}_{d}={{\rm O}(d^2\log d)}R$ as $d\rightarrow\infty$, independent of $r$; and our Strong Conjecture proposes the even lower bound $\hat{\rho}_{d}={{\rm O}(d\log d)}R$ as $d\rightarrow\infty$, again independent of $r$. We verify the Weak Conjecture in important special cases that naturally originate from perceived atomic structure of crystals, and also provide some plausibility for the Strong Conjecture.

The paper is organized as follows. In Section \ref{basics} we introduce basic notions related to Delone sets and regularity radius and recall main results that describe how local structure of Delone sets implies global symmetry.

In Section \ref{clustconj} we consider the family of $d$-dimensional Delone sets with congruent $2R$-clusters that satisfy the additional condition that the boundaries of the smaller $2r$-clusters are full-dimensional. For this family, we prove that the regularity radius grows at most as fast as ${{\rm O}(d^2)}R$ as $d\rightarrow \infty$, see Theorem \ref{thm:towerboundM1}. We all also formulate our main Conjectures \ref{conjw} and \ref{conjs} on the growth rate of the regularity radius for general $2R$-isometric Delone sets in this section.

In Section \ref{conjreach} we extend our study of the regularity radius to the family of $t$-reachable sets, see also \cite{BouDo,Dol76}. In Theorems \ref{thm:towerboundMd} and \ref{thm:a} we prove an upper bound ${{\rm O}(d^2\log d)}R$ for the family of $t$-reachable Delone sets provided $t\leq ar$ for some fixed constant $a$. These results confirm Conjecture~\ref{conjw} for the family of Delone sets under consideration. 

We would like to point out that the family of Delone sets of Theorem~\ref{thm:a} has a reasonable description as the family describing atomic structures of solid crystals, as the minimal distance between atoms in such crystals together with the types of atoms used may dictate the size of voids provided the structure is kept solid by particle interactions.

In the concluding Section \ref{sec:open} we formulate several open problems that provide further insight in how local structures and their symmetries give rise to global structures within Delone sets.

\section{Basic notions}
\label{basics}

Throughout, $\log$ will denote the binary logarithm. In statements concerning asymptotics this could be replaced by the natural logarithm. 

Points, or vectors, in $\mathbb{R}^d$ are denoted by bold-faced letters. If ${\mathbf x}\in\mathbb{R}^d$ and $\rho\geq 0$, we let $B_{\mathbf x}(\rho)$ and 
$B_{\mathbf x}^o(\rho)$ denote the closed respectively open $d$-ball in $\mathbb{R}^d$ of radius $\rho$ centered at ${\mathbf x}$. For $M\subseteq\mathbb{R}^d$ we let ${\rm aff}\,M$ denote the affine hull of $M$. The dimension of ${\rm aff}\,M$ is called the {\em rank\/} of $M$ and is denoted ${\rm rk}\,M$.

Delone sets were already defined in the Introduction. We adopt the convention that in designating the parameters $(r,R)$ of a Delone set $X$ we choose the largest possible value of $r$ and the smallest possible value of $R$ that satisfy the two defining conditions. Thus, the smallest inter-point distance (if it exists) is $2r$ and the radius of the largest ``empty ball'' of $X$ (if it exists) is $R$. (Recall that a closed ball in $\mathbb{R}^d$ is called an {\em empty ball\/} of $X$ if no point of $X$ lies in its interior.)  Thus, the values of $r$ and $R$ are the ``exact" parameters of~$X$.

We later make assumptions on $X$ which guarantee that the smallest inter-point distance is attained and that there are largest empty balls.         

Let $X$ be a Delone set and $\rho\geq 0$. For a point ${\mathbf x}\in X$, we call 
\[C_{\mathbf x}(\rho):=X\cap B_{\mathbf x}(\rho)\] 
the {\em cluster of radius $\rho$} of $X$ with \emph{center} ${\mathbf x}$, or simply the {\em $\rho$-cluster of $X$ at ${\mathbf x}$}. The \emph{$\rho$-cluster group\/} $S_{\mathbf x}(\rho)$ at $\mathbf x$, is the stabilizer of $\mathbf x$ in the full symmetry group of $C_{\mathbf x}(\rho)$. Thus $S_{\mathbf x}(\rho)$ consists of all isometries of $\mathbb{R}^d$ which fix ${\mathbf x}$ and map $C_{\mathbf x}(\rho)$ to itself. Note that $S_{\mathbf x}(\rho)$ consists of $d$-dimensional isometries, even though the underlying cluster itself may lie in a subspace of dimension smaller than $d$ (and then also have a symmetry group in that subspace). 

Clusters of Delone sets are finite point sets. If the {rank of a cluster} is at most $d-2$, the corresponding cluster group must necessarily be infinite. In particular, if $\rho<2r$, then $C_{\mathbf x}(\rho)=\{\mathbf x\}$ and $S_{\mathbf x}(\rho)$ is isomorphic to the full orthogonal group of~$\mathbb{R}^d$. On the other hand, if the {rank of a cluster} is at least $d-1$, then since clusters are finite, the corresponding cluster group is finite as well. For $\rho\geq 2R$, all $\rho$-clusters of $X$ are $d$-dimensional, meaning that the affine hull is $\mathbb{R}^d$, and thus have finite cluster groups~\cite{local}. Note that the cluster groups of $X$ at a point $\mathbf{x}$ are non-increasing as a function of $\rho$, that is, $S_{\mathbf x}(\rho)\supseteq S_{\mathbf x}(\rho')$ whenever $\rho\leq\rho'$. In particular, $S_{\mathbf x}(\rho)$ is a subgroup of the finite group $S_{\mathbf x}(2R)$ whenever  $\rho\geq 2R$. Hence there is a natural interest in bounding the size of $S_{\mathbf x}(2R)$.

Two $\rho$-clusters $C_{\mathbf x}(\rho)$ and $C_{{\mathbf x}'}(\rho)$ at points ${\mathbf x},{\mathbf x}'$ of $X$ are said to be \emph{equivalent} if there exists an isometry of $\mathbb{R}^d$ that maps ${\mathbf x}$ to ${\mathbf x}'$ and $C_{\mathbf x}(\rho)$ to $C_{{\mathbf x}'}(\rho)$. As the isometry must map centers to centers,  equivalence of clusters is stronger than mere congruence of clusters.  Note that the cluster groups of any two equivalent $\rho$-clusters are conjugate subgroups of $\rm{Iso}(d)$, the group of all Euclidean isometries of $\mathbb{R}^d$. 

A Delone set $X$ in $\mathbb{R}^d$ is said to be {$\rho$-{\em {isometric}\/}} if any two $\rho$-clusters of $X$ are equivalent. We are particularly interested in $2R$-{isometric} Delone sets. 

A Delone set $X$ in $\mathbb{R}^d$ is called a {\em regular system\/} if its symmetry group $S(X)$ acts transitively on $X$. Thus a regular system coincides with the orbit of any one of its points under the  symmetry group. A regular system is $\rho$-{isometric} for every $\rho\geq 0$.

Cluster groups occur naturally in the following {\em Local Theorem for Delone Sets\/}, which provides a characterization of the regularity of a Delone set. 

\begin{theorem}
[Local Regularity Criterion, \cite{local}]\label{thm:local} 
A Delone set $X$  in $\mathbb{R}^d$ is a regular system if and only if $X$ satisfies the following two conditions for some $\rho_0>0$ and some ${\mathbf x}_0\in X$:
\begin{itemize}
\item $X$ is $(\rho_0 + 2R)$-{isometric} (that is, any two clusters of $X$ of radius $\rho_0 + 2R$ are equivalent).
\item $S_{{\mathbf x}_{0}}(\rho_0) = S_{{\mathbf x}_{0}}(\rho_0 + 2R)$.
\end{itemize}
Moreover, in this situation, $S_{\mathbf x}(\rho_0) = S_{\mathbf x}(\rho_0 + 2R)$ for all ${\mathbf x}\in X$; the cluster groups stabilize at radius $\rho_0$, that is, $S_{\mathbf x}(\rho) = S_{\mathbf x}(\rho_0)$ for all ${\mathbf x}\in X$ and all $\rho\geq\rho_0$; and the entire regular system $X$ is uniquely determined by the cluster $C_{{\mathbf x}_{0}}(\rho_0 + 2R)$.
\end{theorem}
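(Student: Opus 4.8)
The plan is to prove both implications, with essentially all the work in the converse. The forward implication is routine: if $X$ is a regular system, then every element of $S(X)$ carries clusters to clusters, so $X$ is $\rho$-isometric for all $\rho\geq 0$, and in particular $(\rho_0+2R)$-isometric for any choice of $\rho_0$. For the stabilizer equality I would observe that for $\rho\geq 2R$ the cluster groups $S_{\mathbf{x}_0}(\rho)$ form a non-increasing chain of finite groups, so this chain stabilizes at some finite radius $\rho_0$; then $S_{\mathbf{x}_0}(\rho_0)=S_{\mathbf{x}_0}(\rho_0+2R)$ and both conditions hold. Monotonicity and finiteness do all the work here, and one need not yet identify the stable group with a site-symmetry group.

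For the converse, assume the two conditions hold at $\rho_0,\mathbf{x}_0$. The central claim I would isolate is: \emph{every isometry $g$ of $\mathbb{R}^d$ with $g(\mathbf{x})=\mathbf{x}'$ and $g(C_{\mathbf{x}}(\rho_0+2R))=C_{\mathbf{x}'}(\rho_0+2R)$ for some $\mathbf{x},\mathbf{x}'\in X$ lies in $S(X)$.} Granting this, regularity is immediate: by $(\rho_0+2R)$-isometry such a $g$ with $g(\mathbf{x}_0)=\mathbf{x}'$ exists for every target $\mathbf{x}'$, so $S(X)$ is transitive. The proof of the claim rests on an \emph{overlap lemma}: if $\mathbf{z}\in X$ satisfies $|\mathbf{z}-\mathbf{y}|\leq 2R$ for some $\mathbf{y}$ at which $g$ already maps $C_{\mathbf{y}}(\rho_0+2R)$ onto $C_{g(\mathbf{y})}(\rho_0+2R)$, then $g$ also maps $C_{\mathbf{z}}(\rho_0+2R)$ onto $C_{g(\mathbf{z})}(\rho_0+2R)$. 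The key geometric input is the inclusion $B_{\mathbf{z}}(\rho_0)\subseteq B_{\mathbf{y}}(\rho_0+2R)$, which shows that $g$ correctly maps the small cluster $C_{\mathbf{z}}(\rho_0)$; the stabilizer hypothesis $S_{\mathbf{z}}(\rho_0)=S_{\mathbf{z}}(\rho_0+2R)$ then \emph{upgrades} this to the large cluster. Concretely, comparing $g$ with an equivalence $h$ of the large clusters at $\mathbf{z}$ and $g(\mathbf{z})$ (which exists by $(\rho_0+2R)$-isometry), the composite $h^{-1}g$ lies in $S_{\mathbf{z}}(\rho_0)=S_{\mathbf{z}}(\rho_0+2R)$, forcing $g$ to agree with $h$ on all of $C_{\mathbf{z}}(\rho_0+2R)$.

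To finish the claim I would run a propagation argument: let $A$ be the set of $\mathbf{y}\in X$ at which $g$ maps $C_{\mathbf{y}}(\rho_0+2R)$ onto $C_{g(\mathbf{y})}(\rho_0+2R)$. Then $\mathbf{x}\in A$, and the overlap lemma shows that $A$ is closed under passing to points within distance $2R$. Since a Delone set is connected through steps of length at most $2R$, it follows that $A=X$; in particular $g(X)\subseteq X$, and applying the same to $g^{-1}$ gives $g(X)=X$, so $g\in S(X)$. The main obstacle is precisely this overlap lemma together with the consistency it provides: the stabilizer condition is what rules out any ambiguity when the small-cluster match is enlarged to the large cluster, and it is the exact reason the criterion fails without the second hypothesis.

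Finally, the ``moreover'' assertions follow from the same machinery. Propagating the stabilizer equality to all points uses $(\rho_0+2R)$-isometry: an equivalence $g$ with $g(\mathbf{x}_0)=\mathbf{x}$ conjugates $S_{\mathbf{x}_0}(\rho_0)$ and $S_{\mathbf{x}_0}(\rho_0+2R)$ onto $S_{\mathbf{x}}(\rho_0)$ and $S_{\mathbf{x}}(\rho_0+2R)$ respectively, so their coincidence at $\mathbf{x}_0$ passes to every $\mathbf{x}$. Stabilization of the cluster groups for all $\rho\geq\rho_0$ is a corollary of the central claim: any $s\in S_{\mathbf{x}}(\rho_0+2R)$ is an equivalence of $(\rho_0+2R)$-clusters with $\mathbf{x}'=\mathbf{x}$, hence lies in $S(X)$ and fixes $\mathbf{x}$, so it fixes every cluster $C_{\mathbf{x}}(\rho)$; combined with monotonicity this yields $S_{\mathbf{x}}(\rho)=S_{\mathbf{x}}(\rho_0)$ for all $\rho\geq\rho_0$. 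Unique determination of $X$ by $C_{\mathbf{x}_0}(\rho_0+2R)$ is then read off from the construction itself: all the neighbor equivalences used above are encoded inside this one cluster, and iterating them along $2R$-chains reconstructs the entire orbit $S(X)\cdot\mathbf{x}_0=X$.
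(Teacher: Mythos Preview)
The paper does not give its own proof of this theorem; it is quoted from \cite{local} and used as input for the rest of the paper. There is therefore nothing in the present paper to compare your argument against.

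That said, your sketch is the classical proof (as in \cite{local} and later expositions such as \cite{Dol2018a,DLS1998}): transfer the stabilizer equality from $\mathbf{x}_0$ to every point by conjugation through a $(\rho_0+2R)$-cluster equivalence, then use the ``overlap lemma'' to propagate a single cluster equivalence along $2R$-chains into a global symmetry. One organizational remark: in the overlap lemma you invoke $S_{\mathbf z}(\rho_0)=S_{\mathbf z}(\rho_0+2R)$ at an arbitrary $\mathbf z$, but you only derive this later in the ``moreover'' paragraph; the argument is cleaner if you establish that conjugation step first, since the overlap lemma depends on it. With that reordering the proof is correct.
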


Theorem~\ref{thm:local} provides an affirmative answer to the fundamental question asking whether or not the regularity of Delone sets in $\mathbb{R}^d$ can be recognized on clusters of bounded radius. The smallest such radius, over all Delone sets with parameters $(r,R)$, is called the {\em regularity radius\/} and is denoted~$\hat{\rho}_d$. Thus $\hat{\rho}_d$ is the smallest positive number $\rho$ with the property that each $\rho$-{isometric} Delone set $X$ is a regular system. From Dolbilin, Lagarias \& Senechal~\cite{DLS1998} it is known that 
\begin{equation}
\label{dls98}
\hat{\rho_d} \leq 2R(d^2+1) \log (2R/r+2).
\end{equation}
Our Conjectures~\ref{conjw} and \ref{conjs} below propose an upper bound for $\hat{\rho}_d$ which is independent of~$r$.

The next theorem is a reformulation of \cite[Prop. 2.1]{Dol2018} (see also \cite{DGLS}) and  provides an important tool for bounding the regularity radius $\hat{\rho}_d$ in terms of the dimension~$d$ and the radius  $R$ of the largest empty ball. It gives a necessary condition for the regularity of a Delone set $X$ in terms of the total number of prime factors of the order of its $2R$-cluster groups. 

For a finite group $G$, let $\Omega(G)$ denote the number of prime factors of $|G|$ (counted with multiplicity). Then, 
\begin{equation}
\label{omg}
\Omega(G) +1\,\leq\,\log (2|G|).
\end{equation}
If $X$ is a Delone set, $\mathbf{x}\in X$, and $\rho>0$, we define 
$\Omega_{\mathbf x}(\rho):= \Omega(S_{\mathbf{x}}(\rho))$, which is the number of prime factors of the order of the $\rho$-cluster group $S_{\mathbf{x}}(\rho)$. This number will not depend on $\mathbf{x}$ if $X$ is $\rho$-{isometric}. In our applications, $\rho=2r$ or a constant multiple of $2r$, or $\rho=2R$. 

\begin{theorem}
[Tower Bound]\label{thm:tower}
Let $X$ be a $2R$-{isometric} Delone set in $\mathbb{R}^d$, and let $\mathbf{x}$ be any point of $X$. If $X$ is also  $2(\Omega_{\mathbf{x}}(2R) +2)R$-{isometric}, then $X$ is a regular system.
\end{theorem}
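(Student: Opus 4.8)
The plan is to play the finite descending chain of cluster groups at $\mathbf{x}$ against the arithmetic budget furnished by $\Omega_{\mathbf{x}}(2R)$, and then to feed the resulting stabilization into the Local Regularity Criterion (Theorem~\ref{thm:local}). Write $n:=\Omega_{\mathbf{x}}(2R)$ and consider the groups $G_k:=S_{\mathbf{x}}(2kR)$ for $k=1,2,\dots$. Since every radius $2kR$ is at least $2R$, each cluster $C_{\mathbf{x}}(2kR)$ is full-dimensional and each $G_k$ is finite; moreover, because cluster groups are non-increasing in the radius, the $G_k$ form a descending chain $G_1\supseteq G_2\supseteq\cdots$ of finite groups.

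The key observation is that this chain cannot descend strictly for too long. Whenever $G_k\supsetneq G_{k+1}$, Lagrange's theorem gives $[G_k:G_{k+1}]\geq 2$, and counting prime factors with multiplicity yields $\Omega(G_{k+1})=\Omega(G_k)-\Omega([G_k:G_{k+1}])\leq\Omega(G_k)-1$. Thus each strict drop consumes at least one unit of the budget $\Omega(G_1)=n$. If all of the $n+1$ inclusions $G_1\supseteq G_2,\dots,G_{n+1}\supseteq G_{n+2}$ were proper, we would obtain $\Omega(G_{n+2})\leq n-(n+1)<0$, which is impossible. Hence there exists an index $k$ with $1\leq k\leq n+1$ for which $G_k=G_{k+1}$, that is, $S_{\mathbf{x}}(2kR)=S_{\mathbf{x}}(2(k+1)R)$.

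It remains to assemble the hypotheses of the Local Regularity Criterion with $\rho_0:=2kR>0$. The stabilization $G_k=G_{k+1}$ is exactly the second condition $S_{\mathbf{x}}(\rho_0)=S_{\mathbf{x}}(\rho_0+2R)$. For the first condition I would use that the property of being $\sigma$-isometric passes to all smaller radii: an isometry realizing the equivalence of two $\sigma$-clusters restricts to an equivalence of the corresponding $\sigma'$-clusters for every $\sigma'\leq\sigma$, since it carries center to center and preserves distances. Because $k\leq n+1$ we have $\rho_0+2R=2(k+1)R\leq 2(n+2)R=2(\Omega_{\mathbf{x}}(2R)+2)R$, so the assumed $2(\Omega_{\mathbf{x}}(2R)+2)R$-isometry of $X$ forces $X$ to be $(\rho_0+2R)$-isometric as well. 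Theorem~\ref{thm:local} then applies and shows that $X$ is a regular system.

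The argument is structurally short, and the only point demanding care is the bookkeeping: one must use multiples of $2R$ as the test radii so that consecutive radii differ by precisely $2R$ (matching the $\rho_0$ versus $\rho_0+2R$ shape of the Local Regularity Criterion), and one must check that the index $k$ at which stabilization is guaranteed indeed lands within the isometric range supplied by the hypothesis. I expect no serious obstacle beyond ensuring that the two opposite monotonicities --- cluster groups shrinking and the isometric property propagating downward --- are invoked in the correct directions.
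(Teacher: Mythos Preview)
Your argument is correct and is precisely the standard ``tower'' argument that gives the theorem its name. Note, however, that the paper itself does not supply a proof of Theorem~\ref{thm:tower}: it simply states the result as a reformulation of \cite[Prop.~2.1]{Dol2018} (see also \cite{DGLS}), so there is nothing in the present paper to compare your proof against. Your write-up is exactly the proof one finds in those references: run the descending chain $S_{\mathbf{x}}(2R)\supseteq S_{\mathbf{x}}(4R)\supseteq\cdots$, use Lagrange to bound the number of strict drops by $\Omega_{\mathbf{x}}(2R)$, locate a stabilization index $k\leq\Omega_{\mathbf{x}}(2R)+1$, and invoke Theorem~\ref{thm:local} with $\rho_0=2kR$.
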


The Tower Bound theorem shows that it is very desirable to establish good upper bounds for the size of the cluster groups in a Delone set, and thus for the total number of prime factors. In fact, the theorem shows that 
\begin{equation}
\label{rhod}
\\ \hat{\rho}_d\leq 2(\Omega_{\mathbf{x}}(2R) +2)R,
\end{equation}
for the family of $2R$-{isometric} Delone sets $X$ that have a bounded number of prime factors in the order of the corresponding cluster group. 

In this paper, we study how the regularity radius behaves on specific families $\mathcal{X}$ of Delone sets within the collection of all Delone set with parameters $(r,R)$. This will allow us to motivate our conjecture on an upper bound for $\hat{\rho}_d$.

For a family $\mathcal{X}$ of Delone sets, all with the same parameters $(r,R)$, the $\mathcal{X}$-{\em regularity radius\/} $\hat{\rho}_d(\mathcal{X})$ is the smallest positive number $\rho$ with the property that each $\rho$-{isometric} Delone set $X$ from $\mathcal{X}$ is a regular system. When $\mathcal{X}$ is the set of all Delone sets with parameters $(r,R)$, this is just the usual regularity radius $\hat{\rho}_d$. From the definition it is clear that $\hat{\rho}_d(\mathcal{X})\leq \hat{\rho}_d$ for each family $\mathcal{X}$. 

\section{Upper bounds for cluster groups and a conjecture for $\hat{\rho}_d$}
\label{clustconj}

In a Delone set $X$, 
all $2R$-clusters $C_{\bf x}(2R)$, ${\bf x}\in X$, are full-dimensional and have finite cluster groups $S_{\bf x}(2R)$. If $X$ is $2R$-{isometric}, then any two $2R$-cluster groups are isomorphic (in fact, conjugate in $\rm{Iso}(d)$). We are interested in finding upper bounds for the order of these groups. 

Let $X$ be a $2R$-{isometric} Delone set.  
For a point ${\mathbf x}\in X$, we let $M_{\mathbf x}(1)$ denote the subset of $X$ consisting of ${\mathbf x}$ and the points at minimal distance from $\mathbf{x}$, that is, 
\[M_{\mathbf x}(1) := C_{\mathbf x}(2r).\]
Then $M_{\mathbf x}(1)\subseteq C_{\mathbf x}(2R)$, and $M_{\mathbf x}(1)$ is invariant under $S_{\mathbf x}(2r)$. It follows that the affine hull of $M_{\mathbf x}(1)$ is also invariant under $S_{\mathbf x}(2r)$. But in general this subspace may not have full dimension $d$.

Recall that the {\em kissing number\/} $\tau_d$ for balls (spheres) is the maximum number of mutually non-overlapping congruent $d$-balls in $\mathbb{R}^d$ that can touch a given $d$-ball of the same size. 

The kissing number provides an easy upper bound for the cardinality of $M_{\mathbf x}(1)$, namely 
\[ |M_{\mathbf x}(1)|\leq 1+ \tau_d.\]
In fact, since any two points of $X$ are at least $2r$ apart, the balls of radius $r$ centered at points in 
$M_{\mathbf x}(1)\setminus \{\mathbf x\}$ all touch the ball of radius $r$ centered at ${\mathbf x}$ and are mutually non-overlapping. Hence there can be at most $\tau_d$ such balls. The currently best known upper bound for $\tau_d$ is 
\[\tau_d \leq 2^{0.4011d(1+\,\rm{o}(1))}\;(= 1.3205^{d(1+\,\rm{o}(1))}) \,\mbox{ as }d\rightarrow\infty\]
(see \cite{Bez2010,BDM,KaLe}).

\begin{proposition}
\label{groupM1}
Let $X$ be a Delone set 
in $\mathbb{R}^d$, let $\mathbf{x}\in X$, and suppose $M_{\mathbf x}(1)$ is full-dimensional. Then, 
\[ |S_{\mathbf{x}}(2R)|\leq |S_{\mathbf{x}}(2r)|\,
\leq
\tau_{d}\cdot \tau_{d-1}\cdot\ldots\cdot\tau_{1}
\,\leq\,\tau_d^{\,d}
,\]
and 
\[ \Omega_{\mathbf x}(2R)\leq \Omega_{\mathbf x}(2r)\leq d\log \tau_d
.\]
\end{proposition}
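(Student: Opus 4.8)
The plan is to reduce everything to a finite subgroup of the orthogonal group acting on the nearest-neighbor vectors, and then to bound the order of that group by running the orbit--stabilizer formula down a stabilizer chain built along a basis, with the kissing numbers controlling each successive orbit. To begin, I would dispose of the first inequality $|S_{\mathbf{x}}(2R)|\le|S_{\mathbf{x}}(2r)|$ by monotonicity of the cluster groups: since $2r\le 2R$ one has $S_{\mathbf{x}}(2R)\subseteq S_{\mathbf{x}}(2r)$. For the substantive bound I would translate so that $\mathbf{x}$ is the origin and set $G:=S_{\mathbf{x}}(2r)$; because $M_{\mathbf{x}}(1)$ is full-dimensional this is a finite subgroup of $O(d)$. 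Let $V:=\{\mathbf{v}-\mathbf{x}:\mathbf{v}\in M_{\mathbf{x}}(1),\ \mathbf{v}\neq\mathbf{x}\}$ be the set of nearest-neighbor vectors: each has norm $2r$, any two distinct elements are at distance $\ge 2r$ by the Delone property, and $V$ spans $\mathbb{R}^{d}$ since $M_{\mathbf{x}}(1)$ is full-dimensional. The group $G$ permutes $V$.

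Concretely, I would pick $\mathbf{v}_1,\dots,\mathbf{v}_d\in V$ forming a basis of $\mathbb{R}^{d}$ (possible because $V$ spans) and set $G_0:=G$ and $G_i:=\{g\in G: g\mathbf{v}_j=\mathbf{v}_j,\ 1\le j\le i\}$. An orthogonal map fixing a basis is the identity, so $G_d=\{1\}$, and iterating orbit--stabilizer gives $|G|=\prod_{i=1}^{d}[G_{i-1}:G_i]$, where each index is the length of the orbit of $\mathbf{v}_i$ under $G_{i-1}$. The heart of the argument is the claim that this orbit has length at most $\tau_{d-i+1}$. To establish it, note that $G_{i-1}$ fixes $U:=\tspan(\mathbf{v}_1,\dots,\mathbf{v}_{i-1})$ pointwise (of dimension $i-1$), hence preserves the orthogonal complement $U^{\perp}$ (of dimension $d-i+1$) and leaves the $U$-component of every vector unchanged. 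Writing $\mathbf{v}_i=\mathbf{p}+\mathbf{q}$ with $\mathbf{p}\in U$ and $\mathbf{q}\in U^{\perp}$, every orbit point has the form $\mathbf{p}+g\mathbf{q}$, so the distinct orbit points correspond to the distinct vectors $g\mathbf{q}\in U^{\perp}$, all of norm $|\mathbf{q}|$, and the distance between two orbit points equals the distance between the corresponding $U^{\perp}$-components.

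The step I expect to be the main obstacle is verifying that these $U^{\perp}$-components form a genuine kissing configuration in dimension $d-i+1$. They lie on the sphere of radius $|\mathbf{q}|$ in $U^{\perp}$; since $\mathbf{v}_i$ is linearly independent from $\mathbf{v}_1,\dots,\mathbf{v}_{i-1}$ we have $\mathbf{q}\neq 0$, and $|\mathbf{q}|\le|\mathbf{v}_i|=2r$. Distinct orbit points are distinct elements of $V$, hence at distance $\ge 2r\ge|\mathbf{q}|$ from one another. Thus the $U^{\perp}$-components are points on a sphere whose pairwise distances are at least its radius, which is exactly the configuration counted by the kissing number, so there are at most $\tau_{d-i+1}$ of them; the delicate point is precisely the norm comparison $|\mathbf{q}|\le 2r$ that forces the minimal distance to meet the sphere radius. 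Multiplying the indices yields $|G|\le \tau_d\tau_{d-1}\cdots\tau_1\le\tau_d^{\,d}$, the last inequality using that kissing numbers are nondecreasing in dimension. Finally, since every prime factor of a finite group order is at least $2$, any finite group $H$ satisfies $\Omega(H)\le\log|H|$; applying this to $|S_{\mathbf{x}}(2r)|\le\tau_d^{\,d}$ gives $\Omega_{\mathbf{x}}(2r)\le d\log\tau_d$, while $\Omega_{\mathbf{x}}(2R)\le\Omega_{\mathbf{x}}(2r)$ follows because $|S_{\mathbf{x}}(2R)|$ divides $|S_{\mathbf{x}}(2r)|$ by Lagrange, so its prime factorization is a sub-multiset of the latter's.
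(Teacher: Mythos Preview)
Your proof is correct and follows essentially the same approach as the paper: choose a basis from the nearest-neighbor vectors and bound successive choices by kissing numbers in decreasing dimension. Your orbit--stabilizer formulation with the explicit orthogonal decomposition $\mathbf{v}_i=\mathbf{p}+\mathbf{q}$ is somewhat more careful than the paper's informal ``count the choices'' argument---in particular, you explicitly verify the inequality $|\mathbf{q}|\le 2r$ needed to invoke the kissing bound in $U^{\perp}$, which the paper leaves implicit---but the underlying idea is identical.
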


\begin{proof}
Without loss of generality we may assume that $\mathbf{x}$ is the origin. Then $S_\mathbf{x}(2r)$ is a finite subgroup of the full orthogonal group of $\mathbb{R}^d$ acting faithfully as a permutation group on the full-dimensional set $M_{\mathbf x}(1)$. Now choose a vector space basis of $\mathbb{R}^d$ from the elements of $M_{\mathbf x}(1)\setminus \{\mathbf x\}$. This is possible since $M_{\mathbf x}(1)$ is full-dimensional. Clearly, each element of $S_\mathbf{x}(2r)$ is uniquely determined by its effect on this basis. But $M_{\mathbf x}(1)\setminus \{\mathbf x\}$ contains at most $\tau_d$ elements, so there are at most $\tau_d$ choices for the image point of the first basis vector. 

The image of the second point is now restricted to a sphere of codimension 1 within the sphere of radius $2r$ centered at $\mathbf x$ because the distance between the images of the first and second basis vectors is fixed. This gives at most $\tau_{d-1}$ choices for the image point of the second basis vector once the image of the first is chosen. Using similar arguments, we have at most $\tau_{d-2}$ choices for the image point of the third basis vector once the images of the first and second are chosen, and so on. Hence, 
\[|S_\mathbf{x}(2r)| \leq \tau_{d}\cdot \tau_{d-1}\cdot\ldots\cdot\tau_{1}\leq \tau_d^{\,d}.\]
As $S_{\mathbf{x}}(2R)$ is a subgroup of $S_{\mathbf{x}}(2r)$, then this proves the first part. This subgroup relationship also establishes  the first inequality of the second part, while the second inequality uses (\ref{omg}) and follows directly from 
\[ \Omega_{\mathbf x}(2r) +1 \leq \log (2|S_{\mathbf x}(2r)|) \leq \log {2\tau_d^d} 
=1 + d\log\tau_d
.\]
\end{proof}

\begin{remark}
\label{remone}
Asymptotically, as $d\rightarrow\infty$, the smaller upper bound 
$\tau_{d}\cdot \tau_{d-1}\cdot\ldots\cdot\tau_{1}$ for $|S_{\mathbf{x}}(2R)|$ in Proposition~\ref{groupM1} is significantly better than the  larger upper bound $\tau_d^{\,d}$. However, after taking the logarithm and using the general upper bound $\tau_d \leq 2^{0.4011d(1+\,\rm{o}(1))}$, the resulting terms differ only by a constant factor and thus we use the simpler bound of $\tau_d^d$ instead.
\end{remark}

The kissing number argument underlying Proposition~\ref{groupM1} allows us to analyze the behavior of the regularity radius on the important family of $2R$-isometric Delone sets for which the points at minimal distance from a given point form a full-dimensional set.

\begin{theorem}
\label{thm:towerboundM1}
Suppose $\mathcal{X}$ is the family of $2R$-{isometric} Delone sets $X$ in $\mathbb{R}^d$ with the property that  $M_{\mathbf x}(1)$ is full-dimensional for some (and thus all) $\mathbf{x}\in X$. Then
\[ \hat{\rho}_d(\mathcal{X})\,\leq 2(d\log \tau_d +2)R \leq
\, 4+0.8022{d^2 (1+\,{\rm o}(1))}R \,=
\, {{\rm O}(d^2)}R \,\mbox{ as }  d\rightarrow\infty.\]
The ${\rm O}(d^2)$-term and the ${\rm o}(1)$-term only dependent on the dimension $d$, not the parameters of $X$.
\end{theorem}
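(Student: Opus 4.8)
The plan is to combine the cluster-group bound of Proposition~\ref{groupM1} with the Tower Bound of Theorem~\ref{thm:tower}; both ingredients are already in hand, so the theorem should follow as an essentially immediate corollary once the bookkeeping on the isometry radius is handled carefully. First I would fix an arbitrary $X\in\mathcal{X}$ and a point $\mathbf{x}\in X$. By the definition of $\mathcal{X}$ the set $X$ is $2R$-isometric and $M_{\mathbf x}(1)$ is full-dimensional, so Proposition~\ref{groupM1} applies and yields $\Omega_{\mathbf x}(2R)\leq d\log\tau_d$.

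The one step that requires a moment of care is the monotonicity of the isometry condition in the radius $\rho$: any isometry carrying one $\rho'$-cluster onto another restricts to an equivalence of the enclosed $\rho$-clusters whenever $\rho\leq\rho'$, so being $\rho'$-isometric implies being $\rho$-isometric. I would state this explicitly, since it is the only nontrivial logical link between the hypothesis of the theorem and the hypothesis of the Tower Bound. With it in place, I would suppose in addition that $X$ is $2(d\log\tau_d+2)R$-isometric. Because $\Omega_{\mathbf x}(2R)\leq d\log\tau_d$, we have $2(\Omega_{\mathbf x}(2R)+2)R\leq 2(d\log\tau_d+2)R$, so by monotonicity $X$ is also $2(\Omega_{\mathbf x}(2R)+2)R$-isometric. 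Theorem~\ref{thm:tower} then applies directly (its $2R$-isometric hypothesis holding by membership in $\mathcal{X}$) and forces $X$ to be a regular system. As $X\in\mathcal{X}$ was arbitrary, every $2(d\log\tau_d+2)R$-isometric member of $\mathcal{X}$ is regular, which is precisely the assertion $\hat{\rho}_d(\mathcal{X})\leq 2(d\log\tau_d+2)R$.

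Finally I would convert this into the stated asymptotic form by substituting the known kissing-number estimate $\tau_d\leq 2^{0.4011\,d(1+{\rm o}(1))}$, so that $\log\tau_d\leq 0.4011\,d(1+{\rm o}(1))$ and hence $2(d\log\tau_d+2)R\leq (4+0.8022\,d^2(1+{\rm o}(1)))R={\rm O}(d^2)R$. I do not anticipate a genuine obstacle: the two substantive results are quoted, and the remaining work is to get the direction of the monotonicity implication right and to note that $\Omega_{\mathbf x}(2R)$ is independent of $\mathbf{x}$ (guaranteed by $2R$-isometry), which legitimizes the phrase ``any point $\mathbf{x}$'' in the Tower Bound and makes the final bound uniform over $X\in\mathcal{X}$.
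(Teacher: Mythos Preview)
Your proposal is correct and follows essentially the same route as the paper: apply Proposition~\ref{groupM1} to bound $\Omega_{\mathbf x}(2R)$, then feed this into the Tower Bound (Theorem~\ref{thm:tower}) and finish with the Kabatyanskii--Levenshtein estimate for $\tau_d$. Your explicit treatment of the monotonicity of $\rho$-isometry is a detail the paper absorbs into the inequality~(\ref{rhod}) stated just after Theorem~\ref{thm:tower}, but otherwise the arguments coincide.
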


\begin{proof}
This follows from Theorem~\ref{thm:tower} and Proposition~\ref{groupM1}. So let $X\in\mathcal{X}$; in other words, $X$ is a $2R$-{isometric} Delone set in $\mathbb{R}^d$ with the property that $M_{\mathbf x}(1)$ is full-dimensional for each $\mathbf{x}\in X$. By Proposition~\ref{groupM1}, 
\[\Omega_{\mathbf{x}}(2R)\leq d\log \tau_d\]
for each $\mathbf{x}\in X$, and therefore, using the upper bound for $\tau_d$ and Theorem~\ref{thm:tower}, we get 
\[\hat{\rho}_d(\mathcal{X})\leq 2(\Omega_{\mathbf{x}}(2R) +2)R \leq 2
(d\log \tau_d +2)R \leq 2(2+0.4011{d^2(1+\,\rm{o}(1)))}R.\]
This is the desired inequality.
\end{proof}
\smallskip

Based on the results above (and the discussion in Section~\ref{sec:open}) we offer two conjectures for the growth behavior of the regularity radius. The following {\it Weak Conjecture\/} is supported by both Theorem~\ref{thm:towerboundM1} above and Theorem~\ref{thm:towerboundMd} below. It states that the usual regularity radius $\hat{\rho}_d$ grows at most linearly in the radius of the largest empty ball $R$, and like $d^2\log d$ in the dimension $d$, independent of~$r$. In other words, $\hat{\rho}_{d}/R$ grows at most like $d^2\log d$ in the dimension $d$, independent of $r$.  

\begin{conjecture}{\rm (Weak Conjecture)}
  \label{conjw}
There exists a constant $c$, not depending on $d$ and $R$, such that $\hat{\rho}_d \leq c (d^2\log d)R$ for each $d\geq 1$.
\end{conjecture}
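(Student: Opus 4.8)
The plan is to reduce the conjecture, via the Tower Bound (Theorem~\ref{thm:tower}), to a purely group-theoretic estimate on the $2R$-cluster group. Since (\ref{rhod}) gives $\hat{\rho}_d \le 2(\Omega_{\mathbf x}(2R)+2)R$, it suffices to prove that every $2R$-cluster group of a $2R$-isometric Delone set satisfies $\Omega_{\mathbf x}(2R) = \mathrm{O}(d^2\log d)$, or equivalently, by (\ref{omg}), that $|S_{\mathbf x}(2R)| \le 2^{\mathrm{O}(d^2\log d)}$ with the implied constant independent of $r$. The proven subcase is instructive here: when $M_{\mathbf x}(1)$ is full-dimensional, Proposition~\ref{groupM1} already yields the \emph{stronger} bound $|S_{\mathbf x}(2R)| \le \tau_d^d = 2^{\mathrm{O}(d^2)}$, so the entire difficulty is the $r$-independence when the nearest-neighbour set is degenerate.

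I would try to generalize the flag argument behind Proposition~\ref{groupM1} to an arbitrary $2R$-cluster. Write $G := S_{\mathbf x}(2R)$ and take $\mathbf x$ as the origin, so that $G$ is a finite subgroup of the orthogonal group $\mathrm{O}(d)$ acting faithfully on the full-dimensional cluster $C_{\mathbf x}(2R)$. The idea is to build a $G$-invariant flag $\{0\} = V_0 \subset V_1 \subset \cdots \subset V_m = \mathbb{R}^d$ by successively adjoining the spans of the shells $M_{\mathbf x}(1), M_{\mathbf x}(2), \ldots$ (or, more robustly, of the $t$-reachable sets of Section~\ref{conjreach} for an increasing sequence of thresholds $t$). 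Choosing a basis of cluster points adapted to this flag, one controls the branching at each step: the image of a new basis vector lies on the sphere of its own radius and at fixed distances from all previously chosen images, so once the first image is selected the remaining ones are confined to successively lower-dimensional spheres. When the new basis vector has length $\mathrm{O}(r)$ this branching is governed by a kissing number $\tau_j$, exactly as in Proposition~\ref{groupM1}, and summing over the $d$ steps reproduces an $\mathrm{O}(d^2)$ bound on $\Omega_{\mathbf x}(2R)$.

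The main obstacle is precisely the step where short vectors fail to span. If $M_{\mathbf x}(1)$ lies in a proper subspace, Delone-ness forces the missing directions to be filled by cluster points at distance $\Theta(R)$ from $\mathbf x$, and the number of admissible images of such a vector on its sphere of radius $\Theta(R)$ is no longer a fixed kissing number but a spherical-code count that grows like $(R/r)^{d-1}$. This is the very mechanism responsible for the $\log(2R/r+2)$ factor in the Dolbilin--Lagarias--Senechal bound (\ref{dls98}), and it is exactly what must be eliminated to obtain an $r$-independent estimate. I would attempt to defeat it by a \emph{fine/coarse decomposition}: let $U$ be the subspace spanned by all cluster vectors of length at most $\kappa r$ for a suitable constant $\kappa$. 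Since $G\subset\mathrm{O}(d)$ fixes the origin and preserves lengths, $U$ and $U^\perp$ are automatically $G$-invariant. One then bounds the action of $G$ on $U$ by nested kissing numbers as above, and bounds the action on $U^\perp$ by the \emph{same} argument with $r$ replaced by $R$ --- legitimate there because, by construction, the shortest cluster vectors transverse to $U$ have length $\Theta(R)$, so the relevant length-to-separation ratio is bounded by a constant. Combining the two pieces would give $|G| \le 2^{\mathrm{O}(d^2)}$ with no dependence on $r$.

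The hardest part will be to make this decomposition genuinely uniform across all scales. A Delone set may exhibit a whole hierarchy of intrinsic scales $r \ll \rho_1 \ll \cdots \ll R$, and one must rule out an adversarial configuration in which the cluster group picks up new prime factors at each level in a way that accumulates faster than $d^2\log d$; indeed, the single-threshold choice of $U$ above need not capture intermediate-scale vectors with small transverse spacing. Quantifying how many transverse directions can be ``opened up'' at each scale, and splicing the per-scale kissing-number bounds into one global estimate, is the crux that currently separates the two proven families (Theorems~\ref{thm:towerboundM1} and~\ref{thm:a}) from the full Weak Conjecture. The $t$-reachability formalism of Section~\ref{conjreach} appears to be the right language in which to carry out this multi-scale bookkeeping.
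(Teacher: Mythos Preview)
The statement you are attempting to prove is Conjecture~\ref{conjw}, and the paper does \emph{not} prove it: it is stated as an open problem, with supporting evidence coming only from the two special families in Theorems~\ref{thm:towerboundM1} and~\ref{thm:towerboundMd}/\ref{thm:a}. So there is no ``paper's own proof'' to compare against; any complete argument you supplied would go strictly beyond what the paper establishes.

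Your overall plan --- reduce via the Tower Bound~(\ref{rhod}) to an $r$-independent estimate $|S_{\mathbf x}(2R)|\le 2^{\mathrm{O}(d^2\log d)}$, and attack the latter by a flag/basis argument generalizing Proposition~\ref{groupM1} --- is exactly the philosophy the paper uses for its partial results, and your diagnosis of the obstacle (the $(R/r)^{d-1}$ spherical-code blow-up when short vectors fail to span, which is precisely the source of the $\log(2R/r+2)$ in~(\ref{dls98})) is accurate. However, your proposal is not a proof: the fine/coarse decomposition into $U$ and $U^{\perp}$ does not, as you yourself note, handle a hierarchy of intermediate scales, and you give no mechanism that limits how many such scales can occur or how much group order each can contribute. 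This is a genuine gap, not a technicality --- it is the very reason the paper restricts to families where a full-dimensional set of short (or $t$-reachable) vectors is \emph{assumed}, thereby collapsing the hierarchy to a single scale. Absent a new idea controlling the multi-scale accumulation (or, alternatively, a direct $r$-free bound on $|S_{\mathbf x}(2R)|$ such as the conjectured $h_d=2^d d!$ discussed in Section~\ref{sec:open}), the argument remains incomplete in the same place the paper leaves it.
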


Note that Theorem~\ref{thm:towerboundM1} actually gives a slightly better bound than Conjecture~\ref{conjw} for that specific family $\mathcal{X}$, namely ${{\rm O}(d^2)}R$. Together with the lower bound of $2dR$ for $\hat{\rho}_d$ established in~\cite{LowerBound}, settling Conjecture~\ref{conjw} would narrow down the possible range for $\hat{\rho}_d$ to 
\[ 2dR\, \leq \,\hat{\rho}_d\, \leq\, c(d^2\log d)R \]
and therefore considerably close the gap between the best known lower and upper bounds for~$\hat{\rho}_d$.

Motivated by the linear lower bound of~\cite{LowerBound} and further plausibility considerations in Section~\ref{sec:open}, we are also proposing the following {\it Strong Conjecture\/} for the growth rate of~$\hat{\rho}_d$.

\begin{conjecture}{\rm (Strong Conjecture)}
\label{conjs}
There exists a constant $c$, not depending on $d$ and $R$, such that $\hat{\rho}_d \leq c (d\log d)R$ for each $d\geq 1$.
\end{conjecture}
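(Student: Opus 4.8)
The plan is to run the same engine that proves Theorem~\ref{thm:towerboundM1} --- the Tower Bound of Theorem~\ref{thm:tower} combined with the inequality~(\ref{omg}) --- but to feed it a group-order bound of the form $|S_{\mathbf{x}}(2R)|\leq 2^{{\rm O}(d\log d)}$ rather than the kissing-number bound $\tau_d^{\,d}=2^{{\rm O}(d^2)}$ used in Proposition~\ref{groupM1}. Since Theorem~\ref{thm:tower} yields $\hat{\rho}_d\leq 2(\Omega_{\mathbf{x}}(2R)+2)R$ and (\ref{omg}) gives $\Omega_{\mathbf{x}}(2R)\leq\log(2|S_{\mathbf{x}}(2R)|)$, the Strong Conjecture would follow at once from the single uniform estimate
\begin{equation}
\label{target}
|S_{\mathbf{x}}(2R)|\,\leq\, 2^{\,c'(d\log d)}\qquad\text{for every $2R$-isometric Delone set and every }\mathbf{x}\in X,
\end{equation}
with $c'$ an absolute constant independent of $r$: one would then obtain $\Omega_{\mathbf{x}}(2R)={\rm O}(d\log d)$, and hence, exactly as in the proof of Theorem~\ref{thm:towerboundM1}, every $2(\Omega_{\mathbf{x}}(2R)+2)R$-isometric set would be regular, giving $\hat{\rho}_d={\rm O}(d\log d)R$. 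Thus the whole conjecture reduces to a purely group-theoretic statement about the order of a $2R$-cluster group, and the novelty required over the $O(d^2)R$ bounds is precisely to shave the exponent from $d^2$ down to $d\log d$.

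The natural candidate for establishing (\ref{target}) is a \emph{local crystallographic restriction}. A priori $S_{\mathbf{x}}(2R)$ is only a finite subgroup of the orthogonal group $\mathrm{O}(d)$, and such subgroups already have unbounded order for $d=2$ (the dihedral groups $D_n$), so (\ref{target}) cannot follow from finiteness alone and must exploit the Delone and $2R$-isometry hypotheses. I would try to show that $S_{\mathbf{x}}(2R)$ stabilizes a lattice, i.e.\ is conjugate in $\mathrm{Iso}(d)$ to a subgroup of $\GL_d(\mathbb{Z})$, the natural candidate lattice being the $\mathbb{Z}$-span of the difference vectors of the cluster (or an invariant full-rank sublattice extracted from the orbit of $M_{\mathbf{x}}(1)$): the point is that $2R$-isometry should force these difference vectors to close up into a rank-$d$ lattice that the group permutes. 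Granting lattice-invariance, (\ref{target}) is immediate from the classical fact that, for all sufficiently large $d$, the largest finite subgroup of $\GL_d(\mathbb{Z})$ is the hyperoctahedral group of signed permutations, of order $2^d\,d!$ (with only finitely many small-dimensional exceptions of bounded excess), together with the Stirling estimate $\log(2\cdot 2^d d!)=1+d+\log(d!)={\rm O}(d\log d)$.

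The hard part is exactly this local crystallographic restriction, which is the true content of the conjecture rather than a routine step: we may not assume $X$ is regular, since regularity is what we are trying to deduce, so the translation lattice of a crystallographic group is not available to us, and the argument must rule out --- from local isometry of $2R$-clusters alone --- the ``non-crystallographic'' finite subgroups of $\mathrm{O}(d)$ whose order lies strictly between $2^{{\rm O}(d\log d)}$ and the kissing-number ceiling $2^{{\rm O}(d^2)}$. For $d=3$ this restriction is precisely the theorem that every rotation axis of $S_{\mathbf{x}}(2R)$ has order at most $6$ (see \cite{Sto2010}), a genuinely nontrivial result, and the crux of the Strong Conjecture is to extend an analogous bound to all dimensions. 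A plausible two-step attack on (\ref{target}) that sidesteps full lattice-invariance is: first, apply Jordan's theorem in Collins's sharp form to produce a normal abelian subgroup $A\trianglelefteq S_{\mathbf{x}}(2R)$ of index at most $(d+1)!=2^{{\rm O}(d\log d)}$; and second, bound $|A|$ --- the diagonalizable part carrying the high-order rotations --- by a dimension-free argument using the $(r,R)$-geometry of the cluster. This second step is the main obstacle: controlling the order of the abelian part $A$ is essentially equivalent to the general crystallographic restriction, and it is exactly what distinguishes the Strong Conjecture from the quadratic bounds already proved here.
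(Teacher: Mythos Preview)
Your proposal is not a proof of Conjecture~\ref{conjs}, and neither is there one in the paper: the statement is genuinely a conjecture, not a theorem, and the paper does not purport to prove it. What the paper does offer, in Section~\ref{sec:open} under ``Group Order Problem'', is exactly the reduction you carry out: if one could show $|S_{\mathbf{x}}(2R)|\leq h_d$ with $h_d=2^d d!$ (or any $2^{{\rm O}(d\log d)}$) uniformly over all $2R$-isometric Delone sets, then the Tower Bound (Theorem~\ref{thm:tower}) together with~(\ref{omg}) would give $\hat{\rho}_d={\rm O}(d\log d)R$. So your reduction is correct and matches the paper's own plausibility argument precisely.

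Where your write-up goes beyond the paper is in sketching two possible attacks on the group-order bound~(\ref{target}): lattice-invariance of $S_{\mathbf{x}}(2R)$ followed by the Friedland-type bound on finite subgroups of $\GL_d(\mathbb{Z})$, and alternatively Jordan--Collins to isolate an abelian normal subgroup whose order must then be controlled. You correctly flag that both routes founder at the same place --- ruling out high-order rotations from local data alone --- and that this is essentially the open ``local crystallographic restriction'' (known only for $d\leq 3$ via~\cite{Sto2010}). This is an honest assessment, but you should be explicit that what you have written is a \emph{conditional} argument, not a proof: the estimate~(\ref{target}) is itself conjectural (it is the paper's Group Order Problem), and your proposal does not supply the missing idea needed to establish it.
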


\section{The conjecture and $2r$-reachability}
\label{conjreach}

There is further evidence in support of the Conjecture~\ref{conjw}, as we explain in this section. We begin by introducing the concept of $t$-reachability.

Let $X$ be a Delone set, and let $t$ with $2r\leq t$ be fixed. For $\mathbf{x}\in X$ and $k\geq 0$, let $M_{\mathbf{x}}(k)$ denote the set of all points $\mathbf{y}\in X$ for which there exists a finite chain of points 
\[\mathbf{x}=:\mathbf{x}_0,\mathbf{x}_1,\ldots,\mathbf{x}_{m-1},\mathbf{x}_{m}:=\mathbf{y}\]
in $X$ such that $0\leq m\leq k$ and $||\mathbf{x}_{i+1}-\mathbf{x}_i||\leq t$ for all $i=0,\ldots,m-1$. (We are suppressing the dependence on $t$ in our notation, as later the value of $t$ will be clear from the context.) Hence $M_{\mathbf{x}}(k)$ consists of all points $\mathbf{y}$ of $X$ that are {\em {$t$}-reachable\/} from $\mathbf{x}$ in at most $k$ steps of size at most $t$, and in particular contains $\mathbf{x}$ itself. In particular, $M_{\mathbf{x}}(0)=\{x\}$. We also let
\[ M_{\mathbf{x}}:=\bigcup_{k\geq 0} M_{\mathbf{x}}(k) \]
and call $M_{\mathbf{x}}$ the {\it set of $t$-reachable points} for $\mathbf{x}$. Note that $M_{\mathbf{x}}=M_{\mathbf{y}}$ if and only if ${\mathbf{y}}\in M_{\mathbf{x}}$. Clearly, $M_{\mathbf{x}}(k)\subseteq C_{\mathbf x}(kt)$ for all $k\geq 0$.

Our discussion of $t$-reachable points is inspired by the concept of $t$-bonded sets introduced in Dolbilin~\cite{Dol76} and developed in Bouniaev \& Dolbilin~\cite{BouDo}. A Delone set $X$ is called {\it $t$-bonded} for a parameter $t>0$ if every two points $\mathbf x,\mathbf y\in X$ can be connected by a finite chain 
$\mathbf x=\mathbf x_0,\ldots,\mathbf x_k=\mathbf y$ in $X$ such that 
$||\mathbf x_{i+1}-\mathbf x_{i}||\leq t$ for every $i$. 

Notice that $X$ is $t$-bonded if and only if $X$ coincides with the set of $t$-reachable points $M_{\mathbf x}$ for each ${\mathbf x}\in X$. In this case $M_{\mathbf x}=X$ and in particular, $M_{\mathbf x}$ is full-dimensional. On the other hand, as the example of 
\[X=\mathbb{Z}^d\cup ((\frac12,\ldots,\frac12)+\mathbb{Z}^d), \; d>4, \] 
with $t=2r$ shows, there exists a Delone set $X$ with full-dimensional sets $M_{\mathbf x}$ for every $\mathbf x\in X$, which, nevertheless, is not $2r$-bonded. 

It follows from the definition of the parameters $(r,R)$ that a Delone set $X$ is $t$-bonded for every $t\geq 2R$, and thus $M_{\mathbf x}=X$ in this case.

We are particularly interested in the case when $X$ is $2R$--{isometric} and $t=2r$. In this case successive points in the chain connecting $\mathbf{x}$ to $\mathbf{y}$ are exactly at distance $2r$. Moreover, when $k=1$ our notation is consistent with earlier use of the notation; more precisely, $M_{\mathbf x}(1)$ is the subset of $X$ consisting of $\mathbf x$ and the set of all points of $X$ at minimum distance $2r$ from $\mathbf x$.
\medskip

Our next theorem confirms Conjecture~\ref{conjw} under the assumption that the set of $2r$-reachable points $M_{\mathbf{x}}$ is $d$-dimensional for each $\mathbf{x}\in X$.   

\begin{theorem}
\label{thm:towerboundMd}
Suppose $\mathcal{X}$ is the family of $2R$-{isometric} Delone sets $X$ with the property that the set of $2r$-reachable points $M_{\mathbf x}$ is full-dimensional for each $\mathbf{x}\in X$. Then
\[ \hat{\rho}_d(\mathcal{X}) \leq  {{\rm O}(d^2\log d)}R \,\mbox{ as } d\rightarrow\infty.\]
The ${\rm O}(d^2\log d)$-term only depends on the dimension $d$, not the parameters of $X$.
\end{theorem}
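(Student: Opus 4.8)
The plan is to reduce, via the Tower Bound, to an estimate on the prime-factor count $\Omega_{\mathbf x}(2R)$ that is independent of $r$. By Theorem~\ref{thm:tower} any $X\in\mathcal X$ that is $2(\Omega_{\mathbf x}(2R)+2)R$-isometric is a regular system, so, exactly as in the proof of Theorem~\ref{thm:towerboundM1}, it suffices to prove a uniform bound $\Omega_{\mathbf x}(2R)={\rm O}(d^2\log d)$ over the whole family. Since $C_{\mathbf x}(2R)$ is always full-dimensional, $S_{\mathbf x}(2R)$ acts faithfully on it, and the basis argument underlying Proposition~\ref{groupM1} applies once the single radius $2r$ used there is replaced by variable radii: placing $\mathbf x$ at the origin and choosing a basis $\mathbf v_1,\dots,\mathbf v_d$ among the points of $C_{\mathbf x}(2R)$, every $g\in S_{\mathbf x}(2R)$ is determined by the images $g(\mathbf v_i)$, and $g(\mathbf v_i)$ is a point of $X$ on a sphere of dimension $d-i$ whose radius equals the height $h_i:=\mathrm{dist}(\mathbf v_i,\tspan(\mathbf v_1,\dots,\mathbf v_{i-1}))$. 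As points of $X$ are pairwise at distance at least $2r$, a ball-packing estimate on that sphere bounds the number of admissible images by $(h_i/r+1)^{d-i+1}$, so $|S_{\mathbf x}(2R)|\le\prod_{i=1}^{d}(h_i/r+1)^{d-i+1}$. The entire task is therefore to choose the basis so that the successive heights $h_i$ are controlled independently of $r$; an arbitrary basis only gives $h_i\le 2R$, which reproduces the $r$-dependent bound~(\ref{dls98}).

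I would then split on the ratio $R/r$. If $R<rd$, an arbitrary basis already suffices: here $h_i\le 2R<2rd$, so each factor is at most $(2d+1)^{d-i+1}$, the product is $(2d+1)^{{\rm O}(d^2)}$, and taking logarithms through~(\ref{omg}) gives $\Omega_{\mathbf x}(2R)={\rm O}(d^2\log d)$ with no appeal to reachability at all.

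The hypothesis on $M_{\mathbf x}$ enters only in the complementary range $R\ge rd$. The key input I would establish is that, in a $2R$-isometric Delone set with $M_{\mathbf x}$ full-dimensional, the subspaces $\tspan(M_{\mathbf x}(k)-\mathbf x)$ strictly increase in dimension until they equal $\mathbb R^d$; in particular $M_{\mathbf x}(d)$ is already full-dimensional. Granting this, $M_{\mathbf x}(d)\subseteq C_{\mathbf x}(2rd)\subseteq C_{\mathbf x}(2R)$, and since $M_{\mathbf x}(d)$ is connected in the graph joining points of $X$ at distance $2r$, I can grow the basis greedily: at step $i$ I take an edge of this graph within $M_{\mathbf x}(d)$ having one endpoint in $V_{i-1}:=\tspan(\mathbf v_1,\dots,\mathbf v_{i-1})$ (which contains $\mathbf x$) and the other endpoint $\mathbf v_i\notin V_{i-1}$. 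Then $h_i\le 2r$, every factor in the product is at most $3^{d-i+1}$, and $\Omega_{\mathbf x}(2R)={\rm O}(d^2)$. Together the two ranges give $\Omega_{\mathbf x}(2R)={\rm O}(d^2\log d)$ on all of $\mathcal X$, and the Tower Bound yields $\hat\rho_d(\mathcal X)\le 2(\Omega_{\mathbf x}(2R)+2)R={\rm O}(d^2\log d)R$.

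The main obstacle is this dimension-growth lemma, that is, the absence of ``plateaus'' before full dimension is reached. A plateau at step $k$ means $W_{\mathbf y}:=\tspan(M_{\mathbf y}(1)-\mathbf y)\subseteq V_k$ for every $\mathbf y\in M_{\mathbf x}(k)$, and one must propagate this to every $\mathbf z\in M_{\mathbf x}(k+1)$. Writing $g$ for the congruence supplied by $2R$-isometry with $g(\mathbf y)=\mathbf z$, one has $g(W_{\mathbf y})=W_{\mathbf z}$, so the claim reduces to showing that such $g$ preserve $V_k$. For $k=1$ this is immediate, since then $V_1=W_{\mathbf x}$ and the equal-dimensional inclusion $W_{\mathbf y}\subseteq W_{\mathbf x}$ forces $W_{\mathbf y}=W_{\mathbf x}$, whence $g$ stabilizes $V_1$; the difficulty for larger $k$ is that $V_k$ is assembled from the local spans $W_{\mathbf y}$ along entire reachability chains and need not be matched by a single local congruence. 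I expect the general case to follow by an induction tracking this assembly, in the spirit of the structure theory for $t$-bonded sets in~\cite{BouDo,Dol76}. Everything else—the packing estimate, the greedy basis, and the reduction through the Tower Bound—is routine.
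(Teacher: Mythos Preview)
Your overall plan matches the paper's: split according to whether $dr\le R$ or $dr>R$; in the second regime invoke the bound~(\ref{dls98}) from~\cite{DLS1998} (your basis argument with $h_i\le 2R$ essentially re-derives it); in the first regime establish that $M_{\mathbf x}(d)$ is already full-dimensional, bound $|S_{\mathbf x}(2R)|$, and apply the Tower Bound. Your greedy basis with $h_i\le 2r$ is a pleasant variant of the paper's count---they instead bound $|M_{\mathbf x}(d)|\le(2d+1)^d$ by a ball-packing argument inside $B_{\mathbf x}((2d+1)r)$ and then use $|S_{\mathbf x}(2R)|\le|M_{\mathbf x}(d)|^{d}$---but both routes give $\Omega_{\mathbf x}(2R)={\rm O}(d^{2}\log d)$.

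The real gap is precisely where you flag it: the dimension-growth lemma. Your proposed route, tracking a single local congruence $g$ and asking it to preserve $V_k$, is harder than necessary and, as you note, does not obviously go through for $k>1$. The paper bypasses this entirely. The observation you are missing is that $M_{\mathbf z}(j)$ is completely determined by the cluster $C_{\mathbf z}(2jr)$ (every chain of at most $j$ steps of size $\le 2r$ stays inside that ball), so for $j\le d$ and $2dr\le 2R$ the $2R$-isometry forces ${\rm rk}\,M_{\mathbf z}(j)={\rm rk}\,M_{\mathbf x}(j)$ for \emph{every} $\mathbf z\in X$. Now suppose there is a plateau ${\rm rk}\,M_{\mathbf x}(k)={\rm rk}\,M_{\mathbf x}(k-1)=:m$ with $k\le d$. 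For any $\mathbf z\in M_{\mathbf x}(1)$ one has the \emph{actual set inclusion} $M_{\mathbf z}(k-1)\subseteq M_{\mathbf x}(k)$; since both sides have rank $m$, their affine hulls coincide. Likewise $M_{\mathbf z}(k-1)\subseteq M_{\mathbf z}(k)$, again both of rank $m$, forces ${\rm aff}\,M_{\mathbf z}(k)={\rm aff}\,M_{\mathbf x}(k)$. Iterating this along reachability chains traps all of $M_{\mathbf x}$ in a fixed $m$-flat, contradicting the hypothesis. No congruence needs to stabilize $V_k$: the equality of affine hulls comes from equal-rank inclusions of sets, not from an isometry carrying one onto the other.
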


\begin{proof}
We split $\mathcal{X}$ into two subfamilies and deal with each subfamily separately. Let $\mathcal{X}'$ and $\mathcal{X}''$ denote the subfamilies of $\mathcal{X}$ consisting of all Delone sets $X$ in $\mathcal{X}$ with $2dr\leq 2R$ or $2dr>2R$, respectively. 

First suppose that $X$ lies in $\mathcal{X}'$. In this case we claim that already $M_{\mathbf x}(d)$ is full-dimensional for each ${\mathbf x}\in X$. Suppose that 
{${\rm rk}\, M_{\mathbf x}(d)\leq d-1$} for some $\mathbf x\in X$. Then there is $1<k\leq d$ such that 
$${\rm rk}\, M_{\mathbf x}(k)= {{\rm rk}\, M_{\mathbf x}(k-1)}=:m\leq d-1.$$ 
We use induction to show that then each $M_{\mathbf x}(l)$ with $l\geq k$, and thus $M_{\mathbf x}$ itself, must also have rank $m$. 

First, suppose $\mathbf y$ is a point in $M_{\mathbf x}(k+1)$. We claim that $\mathbf y\in {\rm aff}\, M_{\mathbf x}(k)$. Indeed, there is a point $\mathbf z\in M_{\mathbf x}(1)$ such that $\mathbf y\in M_{\mathbf z}(k)$. Then, since $2dr\leq 2R$ and $k\leq d$, the $2kr$-clusters of $\mathbf x$ and $\mathbf z$ are equivalent, and the $2(k-1)r$-clusters of $\mathbf x$ and $\mathbf z$ also are equivalent. In particular, $M_{\mathbf x}(k)$ and $M_{\mathbf z}(k)$ {have rank} $m$, and $M_{\mathbf x}(k-1)$ and $M_{\mathbf z}(k-1)$ also have rank~$m$. Since $M_{\mathbf z}(k-1)$ is a subset of both $M_{\mathbf x}(k)$ and $M_{\mathbf z}(k)$, the $m$-dimensional spans of $M_{\mathbf x}(k)$ and $M_{\mathbf z}(k)$ coincide and thus $\mathbf y\in {\rm aff}\, M_{\mathbf x}(k)$. 

Now, since $\mathbf y$ was an arbitrary point in $M_{\mathbf x}(k+1)$, these arguments show that then also {${\rm rk}\,M_{\mathbf x}(k+1) = m$}. This settles the case $l=k+1$.

Proceeding inductively, we can apply similar arguments to show that {${\rm rk}\,M_{\mathbf x}(l)=m$} for each $l\geq k$. Then it follows that {${\rm rk}\, M_{\mathbf x}=m<d$}, which is a contradiction to our assumption on $X$. Thus $M_{\mathbf x}(d)$ must be full-dimensional for each $x\in X$ if $X$ lies in $\mathcal{X}'$.

Since $X$ is $2R$-{isometric} and 
$M_{\mathbf x}(d)\subseteq C_{\mathbf x}(2dr)\subseteq C_{\mathbf x}(2R)$ 
for each ${\mathbf x}\in X$, any two sets $M_{\mathbf x}(d), M_{\mathbf x'}(d)$ with ${\mathbf x},{\mathbf x'}\in X$ are congruent and thus have the same cardinality denoted $\mu$. We claim that 
\[ \mu\leq (2d+1)^d .\]
In fact, since the smallest inter-point distance of $X$ is $2r$, it follows that for any ${\mathbf x}\in X$, the family of $d$-balls $B_{\mathbf y}(r)$, ${\mathbf y}\in M_{\mathbf x}(d)$, forms a packing of $d$-balls of radius $r$ all contained in $B_{\mathbf x}(2dr +r)$. If $\kappa_d$ denotes the volume of the $d$-ball of radius $1$, then the packing property shows that $\mu\, r^d\kappa_{d} \leq (2dr+r)^d\kappa_d$ and thus $\mu\leq (2d+1)^d$. 

A similar argument as at the beginning of the proof of Proposition~\ref{groupM1} then shows that  
\[  |S_\mathbf{x}(2R)|\leq |S_\mathbf{x}(2dr)|\leq \mu^{d}\leq (2d+1)^{d^2}.\]
(In this case there are at most $\mu$ choices for the image point of each basis vector.) 
Hence
\[\Omega_{\mathbf x}(2R) +1 \leq \log (2|S_{\mathbf x}(2R)|)\leq 1+d^{2}\log (2d+1)\]
and in analogy to (\ref{rhod}),
\[ \hat{\rho}_d(\mathcal{X}') \leq 2(\Omega_{\mathbf{x}}(2R) +2)R \leq (4+2d^{2}\log (2d+1))R 
={{\rm O}(d^2\log d)}R.\]

The case when $X$ lies in $\mathcal{X}''$ can be settled by appealing to the upper bound for $\hat{\rho_d}$ established in Dolbilin, Lagarias \& Senechal~\cite{DLS1998}, 
\[ \hat{\rho_d} \leq 2R(d^2+1) \log (2R/r+2).\]
Since now $2dr>2R$, this gives 
\[ \hat{\rho}_d(\mathcal{X}'')\leq 2R(d^2+1) \log (2d+2)
={{\rm O}(d^2\log d)}R.\]

Finally, combining the bounds for the two families $\mathcal{X}'$ and $\mathcal{X}''$ we obtain the desired upper bound for $\mathcal{X}$ itself, 
\[ \hat{\rho}_d(\mathcal{X}) 
= {\rm max}\big(\hat{\rho}_d(\mathcal{X}'),\hat{\rho}_d(\mathcal{X}'')\big)
={\rm O}(d^2\log d)R.\]
\end{proof}
\smallskip

Our next theorem is a generalization of Theorem~\ref{thm:towerboundMd}. It provides further evidence in support of Conjecture~\ref{conjw} and exploits $t$-reachability for other values of $t$. 

Let $a\geq 1$ be a fixed real number, and set $t:=2ar$. Write $M_{\mathbf{x}}^{a}(k)$ instead of $M_{\mathbf{x}}(k)$ for the set of all points of $X$ that can be reached from $\mathbf{x}$ in at most $k$ steps of size at most $2ar=t$, and write $M^a_{\mathbf{x}}$ instead of $M_{\mathbf{x}}$ for the set of $2ar$-reachable points for $\mathbf{x}$. When $a=1$ we are back in the case $t=2r$.

\begin{theorem}
\label{thm:a}
Suppose $\mathcal{X}_a$, $a\geq 1$, is the family of $2R$-{isometric} Delone sets $X$ with the property that $M^a_{\mathbf{x}}$ is full-dimensional for each $\mathbf{x}\in X$. Then
\[ \hat{\rho}_d(\mathcal{X}_a) =  {{\rm O}(d^2\log d)}R \,\mbox{ as } d\rightarrow\infty,\]
with an ${{\rm O}(d^2\log d)}$-term depending on both $d$ and $a$.
\end{theorem}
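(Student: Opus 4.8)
The plan is to follow the proof of Theorem~\ref{thm:towerboundMd} almost verbatim, replacing the step size $2r$ by $t=2ar$ and, correspondingly, the threshold $2dr$ by $2dar$ throughout. Thus I would split $\mathcal{X}_a$ into the subfamily $\mathcal{X}_a'$ of those $X$ with $2dar\leq 2R$ and the subfamily $\mathcal{X}_a''$ of those $X$ with $2dar>2R$, bound $\hat{\rho}_d(\mathcal{X}_a')$ and $\hat{\rho}_d(\mathcal{X}_a'')$ separately, and take the maximum.

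For $X\in\mathcal{X}_a''$ the bound is immediate from Dolbilin, Lagarias \& Senechal~\cite{DLS1998}: since $2dar>2R$ gives $2R/r<2da$, the estimate $\hat{\rho_d}\leq 2R(d^2+1)\log(2R/r+2)$ yields $\hat{\rho}_d(\mathcal{X}_a'')\leq 2R(d^2+1)\log(2da+2)={\rm O}(d^2\log d)R$, where the implied constant absorbs $\log(2a)$ and hence depends on $a$.

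The substantive case is $\mathcal{X}_a'$. Here I would first argue that $M^a_{\mathbf x}(d)$ is already full-dimensional for every $\mathbf x\in X$. The key point, exactly as for $t=2r$, is that $M^a_{\mathbf x}(k)\subseteq C_{\mathbf x}(2kar)$ and that $M^a_{\mathbf x}(k)$ is completely determined by the cluster $C_{\mathbf x}(2kar)$ as a configuration, so that for $k\leq d$ the inequality $2kar\leq 2dar\leq 2R$ together with $2R$-isometry makes the reachability sets of a point and of any of its $t$-neighbors correspond under an isometry. Running the rank-stabilization argument: if ${\rm rk}\,M^a_{\mathbf x}(d)\leq d-1$, there is some $1<k\leq d$ with ${\rm rk}\,M^a_{\mathbf x}(k)={\rm rk}\,M^a_{\mathbf x}(k-1)=:m$, so the common affine hull $A$ of $M^a_{\mathbf x}(k-1)$ and $M^a_{\mathbf x}(k)$ has dimension $m$. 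I would then show that this stalled hull propagates: for any $\mathbf w\in M^a_{\mathbf x}$ whose level-$k$ and level-$(k-1)$ hulls both equal $A$, every $t$-neighbor $\mathbf w'$ of $\mathbf w$ again has both its level-$k$ and level-$(k-1)$ hulls equal to $A$, using only the fixed-radius equivalence of the $2kar$-clusters of $\mathbf w$ and $\mathbf w'$ and the inclusion $M^a_{\mathbf w'}(k-1)\subseteq M^a_{\mathbf w}(k)$. Since $M^a_{\mathbf x}$ is connected by $t$-steps, this forces $M^a_{\mathbf x}\subseteq A$ and hence ${\rm rk}\,M^a_{\mathbf x}=m<d$, contradicting the hypothesis that $M^a_{\mathbf x}$ is full-dimensional.

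With $M^a_{\mathbf x}(d)$ full-dimensional, the remaining steps are routine. A ball-packing estimate inside $B_{\mathbf x}(2dar+r)$ gives $\mu:=|M^a_{\mathbf x}(d)|\leq (2da+1)^d$; the basis argument of Proposition~\ref{groupM1}, applied to the $S_{\mathbf x}(2dar)$-invariant full-dimensional set $M^a_{\mathbf x}(d)$, gives $|S_{\mathbf x}(2R)|\leq|S_{\mathbf x}(2dar)|\leq\mu^d\leq(2da+1)^{d^2}$; inequality~(\ref{omg}) then yields $\Omega_{\mathbf x}(2R)\leq d^2\log(2da+1)$; and the Tower Bound (Theorem~\ref{thm:tower}), in the form~(\ref{rhod}), gives $\hat{\rho}_d(\mathcal{X}_a')\leq(4+2d^2\log(2da+1))R={\rm O}(d^2\log d)R$. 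Combining the two subfamilies finishes the proof. I expect the main obstacle to be precisely the full-dimensionality step for $\mathcal{X}_a'$: the propagation of the stalled affine hull must be organized so that it never invokes cluster equivalence beyond radius $2R$, which is why the comparison is always made between a point and its immediate $t$-neighbor at the fixed radius $2kar\leq 2R$, rather than between $\mathbf x$ and a point reachable in many steps. Everything else is bookkeeping, with the only genuine change from Theorem~\ref{thm:towerboundMd} being that each occurrence of $d$ in the relevant radii and cardinalities becomes $da$, which is exactly what makes the implied constant depend on~$a$.
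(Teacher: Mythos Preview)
Your proposal is correct and follows essentially the same route as the paper's own proof: the same split of $\mathcal{X}_a$ at the threshold $2dar\lessgtr 2R$, the same rank-stabilization argument to force $M^a_{\mathbf x}(d)$ full-dimensional in the first subfamily, the same packing bound $\mu\le(2da+1)^d$, the same crude basis count $|S_{\mathbf x}(2R)|\le\mu^d$, and the same appeal to~(\ref{dls98}) for the second subfamily. Your propagation formulation of the rank argument (fixing the radius $2kar$ and moving the base point along $t$-neighbors) is a slightly more explicit packaging of what the paper calls ``similar arguments'' in the inductive step, and it makes transparent that cluster equivalence is never invoked beyond radius $2R$; otherwise the two proofs coincide.
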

 
\begin{proof}
Similarly to the proof of Theorem \ref{thm:towerboundMd}, we split $\mathcal{X}_a$ into two subfamilies and deal with each subfamily separately. Let $\mathcal{X}'_a$ and $\mathcal{X}''_a$ denote the subfamilies of $\mathcal{X}$ consisting of all Delone sets $X$ in $\mathcal{X}$ with $2dar\leq 2R$ or $2dar>2R$, respectively.

If $X\in \mathcal{X}'_a$, then we can use the same arguments as in the proof of Theorem \ref{thm:towerboundMd} to show that already $M^a_{\mathbf x}(d)$ is full-dimensional for every $\mathbf x\in X$. Yet another appeal to the proof of Theorem \ref{thm:towerboundMd} then shows that the cardinality $\mu$ of $M^a_{\mathbf x}(d)$ satisfies the inequality $\mu\leq (2da+1)^d$. Hence
\[  |S_\mathbf{x}(2R)|\leq |S_\mathbf{x}(2dar)|\leq \mu^{d}\leq (2da+1)^{d^2}\]
and
\[\Omega_x(2R) +1 \leq \log (2|S_{x}(2R)|)\leq 1+d^{2}\log (2da+1).\]
Proceeding as in (\ref{rhod}) then establishes 
\[ \hat{\rho}_d(\mathcal{X}'_a) \leq 2(\Omega_{\mathbf{x}}(2R) +2)R \leq (4+2d^{2}\log (2da+1))R 
={{\rm O}(d^2\log d)}R,\]
but now the ${{\rm O}(d^2\log d)}$-term depends on $a$ as well.

The case when $X$ belongs to $\mathcal{X}''_a$ can be settled as in the previous theorem by using the bound from Dolbilin, Lagarias \& Senechal~\cite{DLS1998}. Combining the bound 
\[ \hat{\rho_d} \leq 2R(d^2+1) \log (2R/r+2)\] from \cite{DLS1998} with the inequality $2dar>2R$ we obtain 
\[ \hat{\rho}_d(\mathcal{X}''_a)\leq 2R(d^2+1) \log (2da+2)
={{\rm O}(d^2\log d)}R,\]
again with an ${{\rm O}(d^2\log d)}$-term that depends on $a$.

The statement of the theorem now follows from combining the two upper bounds for $\hat{\rho}_d(\mathcal{X}'_a)$ and $\hat{\rho}_d(\mathcal{X}''_a)$.
\end{proof}

Note that the growth behavior of the two upper bounds for $\hat{\rho}_d(\mathcal{X}'_a)$ and $\hat{\rho}_d(\mathcal{X}''_a)$ in the proof of Theorem~\ref{thm:a} is essentially controlled by 
\[ d^2 \log (da) = d^2 \log (d) + d^2 \log (a)\]
as the dominant term. This shows that a change in the reachability step size from $2r$ to $2ar$, $a\geq 1$, results (essentially) in the addition of a term $d^2\log (a)$ to the bound of Theorem~\ref{thm:towerboundMd} for $a=1$.
\medskip

\begin{remark}
\label{remtwo}
In Theorem~\ref{thm:a}, if $2ar\geq 2R$ then $M^a_{\mathbf{x}}=X$ and $\mathcal{X}_a$ coincides with the family of all Delone sets with parameters $(r,R)$. In particular this holds for $a=R/r$. In Theorem~\ref{thm:a} it therefore suffices to assume that $1\leq a \leq R/r$. For any $a< R/r$ there exists a Delone set $X$ where the sets of $2ar$-reachable points $M^a_{\mathbf{x}}$, $x\in X$, are proper subsets of $X$ of rank less than $d$. On the other hand, the extreme case $a=R/r$ is covered by Theorem~\ref{thm:a}, and in this case the bound of Theorem~\ref{thm:a} provides an explicit upper bound for the regularity radius for the whole family of Delone sets that depends on $R$ and $R/r$. 
\end{remark}

Concluding this section, we note that while our asymptotic results describe the behavior of the regularity radius in high dimensions rather than in ordinary 3-space (where we have better results~\cite{DGLS}), we are expecting that a vast majority of structured solid materials can be modelled using Delone sets with full-dimensional sets $M^a_{\mathbf x}$ for some $a\geq 1$; the reason is that such {\it short-range} interactions between atoms should more likely enforce a solid structure than the lack of such interactions.

\section{Open Problems}
\label{sec:open}

We conclude with several open problems. 

\subsection{Group Order Problem.}
This problem concerns the size of the $2R$-cluster groups $S_{\mathbf x}(2R)$ in a $2R$-isometric Delone set in $\mathbb{R}^d$. We let $h_d$ denote an upper bound for the order $|S_{\mathbf x}(2R)|$ of these groups, and note that our notation suppresses a possible dependence on the parameters of $X$. It is known that $h_2 = 12$ and $h_3 = 48$. For $d \geq 4$, there is a known upper bound depending on the parameters of $X$ that comes from the cardinality of the $2R$-clusters~\cite{DLS1998}. The problem is to find, for $d \geq 4$, an upper bound $h_d$ that does not depend on the ratio $R/r$. 

We conjecture that $h_d = 2^d\cdot d!$ for all sufficiently large $d$, independent of the parameters (see also~\cite{Fri97}). If true, this implies a better bound on the regularity radius than the ${{\rm O}(d^2\log d)}R$ bound of our Weak Conjecture~\ref{conjw}. In fact, if $h_d = 2^d\cdot d!$ for all sufficiently large $d$, then
 \[ \Omega_{\mathbf x}(2R) +1 \leq \log (2|S_{\mathbf x}(2R)|) 
\leq \log (2h_d) = 1+d+ \log (d!) = {\rm O}(d\log d)R,\]
independent of $r$, and therefore also 
$\hat{\rho}_{d}={\rm O}(d\log d)R$. This lends plausibility to our Strong Conjecture~\ref{conjs}.

The case $d=4$ deserves special attention. We are proposing to find a positive constant $c$, independent of the parameters $(r,R)$, such that 
\[\hat{\rho}_{4}\leq 2cR.\]  
In this case it would be helpful to establish a good upper bound for the order of the $2R$-cluster groups in a $2R$-isometric 4-dimensional Delone set. 

\subsection{Drop of Symmetries.} Our next question concerns the stabilization of the cluster group as the radius of the cluster grows.

Let $X$ be a {\em regular\/} Delone set, and let $\mathbf x \in X$. We define the {\it drop of $X$}, denoted $D(X)$ as the number of proper inclusions in the sequence of the subgroups
$$S_{\mathbf x}(2R)\supseteq S_{\mathbf x}(4R) \supseteq S_{\mathbf x}(6R) \supseteq \ldots$$
Note that, since $X$ is regular, the number of proper inclusions does not depend on the point~$\mathbf x$.

If $X$ is locally asymmetric, that is, if $S_{\mathbf x}(2R)$ is trivial, then clearly $D(X)=0$. However, if the symmetry group of the $2R$-cluster of $X$ is rich, then theoretically the drop could be as large as the number of prime factors in its order. 

On the other hand, the treatment of Delone sets with octahedral and tetrahedral symmetries in $\R^3$ in \cite{Dol2018} suggests that rich local symmetries may enforce regularity without requiring too many drops.


Let $D_d$ denote the maximal drop $D(X)$ among all regular $d$-dimensional Delone sets~$X$. We are asking to determine or bound $D_d$ as a function of $d$.

Note that regular Delone sets $X$ among the Engel sets of~\cite{LowerBound} give examples in dimension~$d$ with $D(X)=d-2$, and therefore $D_d$ is at least linear in $d$. We expect that a detailed study of the maximal drop value $D_d$ and of constructions of regular sets with maximal drop may lead to improved lower bounds for the regularity radius, much like the construction of Engel sets leads to the currently best known lower bound.

\medskip

\subsection{Non-regular yet orderly Delone sets.} Delone sets are more than a descriptive tool for discrete point sets: as this paper shows, they are also a tool for characterizing the relation between global regularity and local structure. On the other hand, finding local criteria for orderly but non-regular sets (with ``orderly'' suitably defined) is still unexplored. Delone sets on the line would be one good place to start (see, for example,
\cite{Sen1995} and \cite{Bal2014}).

\subsection*{Acknowledgment}
We would like to thank the American Institute of Mathematics (AIM) for hosting our SQuaRe research project on ``Delaunay Sets: Local Rules in Crystalline Structures'', as well as the 2016  AIM workshop on ``Soft Packings, Nested Clusters, and Condensed Matter''. The present paper resulted from discussions initiated at the second SQuaRe meeting in 2019. We greatly appreciated the opportunity to meet at AIM and are grateful to AIM for the hospitality. The work of N.~D.   was performed at the Steklov International Mathematical Center and supported by the Ministry of Science and Higher Education of the Russian Federation (agreement no. 075-15-2022-265). The work of A.~G. was partially supported by the Alexander von Humboldt Foundation. The work of E.~S. was supported by Simons Foundation award no.~420718.

\end{document}